\title[Examples of plane rational curves with two Galois points]{Examples of plane rational curves with two Galois points in positive characteristic} 
\author{Satoru Fukasawa and Katsushi Waki}
\subjclass[2010]{14H50, 20G40}
\keywords{Galois point, plane curve, Galois group, projective linear groups}
\address{Department of Mathematical Sciences, Faculty of Science, Yamagata University, Kojirakawa-machi 1-4-12, Yamagata 990-8560, Japan} 
\email{s.fukasawa@sci.kj.yamagata-u.ac.jp} 
\email{waki@sci.kj.yamagata-u.ac.jp}
\thanks{The first author was partially supported by JSPS KAKENHI Grant Number 16K05088.}
\newtheorem{theorem}{Theorem}
\newtheorem{problem}{Problem}
\newtheorem{lemma}{Lemma}
\begin{document}
\begin{abstract}  
We present four new examples of plane rational curves with two Galois points in positive characteristic, and determine the number of Galois points for three of them. 
Our results are related to a problem on projective linear groups. 
\end{abstract}

\maketitle 

\section{Introduction} 
Let $C \subset \Bbb P^2$ be an irreducible plane curve over an algebraically closed field $k$ of characteristic $p \ge 0$ with $k(C)$ as its function field.  
For a point $P \in \Bbb P^2$, if the function field extension $k(C)/\pi_P^*k(\Bbb P^1)$ induced by the projection $\pi_P$ is Galois, then $P$ is called a Galois point for $C$. 
This notion was introduced by Yoshihara (\cite{miura-yoshihara, yoshihara1}).
Furthermore, if a Galois point $P$ is a smooth point of $C$ (resp. a point in $\Bbb P^2 \setminus C$), then $P$ is said to be inner (resp. outer). 
The associated Galois group at $P$ is denoted by $G_P$. 
Determining the number of Galois points in general is difficult. 
For example, there are not so many examples of plane curves with two Galois points (see the Tables in \cite{yoshihara-fukasawa}). 
In this note, we present four examples of plane rational curves with two Galois points, which update the Tables in \cite{yoshihara-fukasawa}. 

Hereafter, we assume that $p \ge 3$ and $q \ge 5$ is a power of $p$. 

\begin{theorem} \label{main1}
Let $C_1$ be the plane curve of degree $q$ which is the image of the morphism 
$$ \varphi_1: \Bbb P^1 \rightarrow \Bbb P^2; \ (s:t) \mapsto (s^{\frac{q+1}{2}}t^{\frac{q-1}{2}}:(s-t)t^{q-1}:s^q-st^{q-1}). $$
Then: 
\begin{itemize}
\item[(a)] The point $P_1:=(0:1:0)=\varphi_1(0:1)$ is an inner Galois point with $G_{P_1} \cong D_{q-1}$, where $D_{q-1}$ is the dihedral group of order $q-1$. 
\item[(b)] The point $P_2:=(1:0:0)=\varphi_1(1:1)$ is an inner Galois point with $G_{P_2} \cong \Bbb Z/(q-1)\Bbb Z$.  
\item[(c)] The number of inner Galois points on $C_1$ is exactly two. 
\end{itemize}
\end{theorem}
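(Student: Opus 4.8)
The plan is to translate ``inner Galois point'' into a linear-algebra condition on the linear system of $\varphi_1$ and then read off the answer from the very sparse shape of that system. Write $F_0 = s^{(q+1)/2}t^{(q-1)/2}$, $F_1 = (s-t)t^{q-1}$, $F_2 = s^q - st^{q-1}$ for the three forms defining $\varphi_1$, and set $W = \langle F_0, F_1, F_2\rangle \subset H^0(\mathbb{P}^1, \mathcal{O}(q))$. For a smooth point $P = \varphi_1(\tilde P)$, the forms of $W$ vanishing at $\tilde P$ form a pencil $W_{\tilde P}$ whose base locus is $\tilde P$ itself with multiplicity one; removing this base point writes $W_{\tilde P} = L\cdot\tilde\Pi$ with $L$ the linear form vanishing at $\tilde P$ and $\tilde\Pi$ a base-point-free pencil of degree $q-1$ computing $\pi_P\circ\varphi_1$. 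Then $P$ is an inner Galois point if and only if $\tilde\Pi$ is Galois, equivalently there is a subgroup $G\subset \mathrm{PGL}_2(k)$ of order $q-1$ under which both members of $\tilde\Pi$ are semi-invariant with one common character; the group is then $G_P$. Since $q-1$ is prime to $p$, Dickson's classification of finite subgroups of $\mathrm{PGL}_2(k)$ leaves only $G$ cyclic or dihedral, apart from the three values $q-1\in\{12,24,60\}$ where $A_4,S_4,A_5$ must be excluded by hand.

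Next I would classify, in coordinates adapted to $G$, the degree-$(q-1)$ pencils on which $G$ acts by a single character. A short weight computation on monomials shows that for cyclic $G$, with fixed points $\{u=0\},\{v=0\}$, the only such pencil is the Fermat pencil $\langle u^{q-1}, v^{q-1}\rangle$, while for dihedral $G$ it is the single pencil $\langle u^{q-1}-v^{q-1},\ (uv)^{(q-1)/2}\rangle$. Consequently an inner Galois point must satisfy $L\langle u^{q-1},v^{q-1}\rangle\subset W$ (cyclic case) or $L\langle u^{q-1}-v^{q-1},(uv)^{(q-1)/2}\rangle\subset W$ (dihedral case) for suitable linear forms $u,v,L$, with $\tilde P=\{L=0\}$. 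The decisive input is that every element of $W$ is supported on only the four monomials $s^q,\ s^{(q+1)/2}t^{(q-1)/2},\ st^{q-1},\ t^q$, so membership in $W$ becomes a rigid system of equations on the unknown linear forms.

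For the cyclic case this finishes cleanly: the condition forces both $Lu^{q-1}$ and $Lv^{q-1}$ to be forms of $W$ of the shape $(\text{linear})^{q-1}\cdot(\text{linear})$, and I would show, using that all binomial coefficients $\binom{q-1}{i}$ are nonzero modulo $p$, that the only such forms in $W$ are $F_1 = t^{q-1}(s-t)$ and $(s-t)^q = F_1+F_2$. As $u\ne v$, one gets $\{u,v\}=\{t,\,s-t\}$ and $L=s-t$, hence $\tilde P=(1:1)=\tilde P_2$; so $P_2$ is the unique cyclic inner Galois point, matching part (b).

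The remaining work, and the main obstacle, is the dihedral case: I must show that $L(uv)^{(q-1)/2}\in W$ and $L(u^{q-1}-v^{q-1})\in W$ force $\tilde P=(0:1)=\tilde P_1$. Here one looks for forms of $W$ divisible by $\ell_1^{(q-1)/2}\ell_2^{(q-1)/2}$ for distinct linear $\ell_1,\ell_2$; dehomogenizing reduces this to asking which members of the four-term family above acquire two linear factors of multiplicity $(q-1)/2$, and then matching the second condition. The forms $F_0=s\,(st)^{(q-1)/2}$ and $F_2=s(s^{q-1}-t^{q-1})$ exhibit the solution $u=s$, $v=t$, $L=s$, i.e. $\tilde P_1$; the point is to prove there is no other, which requires a careful characteristic-$p$ analysis of the multiplicities (controlled again by Lucas' theorem and by $(q-1)/2$ being prime to $p$), together with excluding the exceptional groups $A_4,S_4,A_5$ for the three small $q$. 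Combining the two cases, $P_1$ and $P_2$ are the only inner Galois points, which is (c).
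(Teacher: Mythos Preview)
Your approach is genuinely different from the paper's. For (a) and (b) the paper simply computes $\pi_{P_1}\circ\varphi_1(s:1)=(1:f(s))$ with $f(s)=s^{(q-1)/2}-s^{-(q-1)/2}$ and exhibits an explicit dihedral group fixing $f$ (their Lemma~1), and likewise $\pi_{P_2}\circ\varphi_1(s:1)=(g(s):1)$ with $g(s)=(s-1)/(s^q-s)$ fixed by an explicit cyclic generator (Lemma~2). Your invariant-pencil framework recovers the same two points, and your cyclic half of (c) is correct: the Lucas-type observation that all $\binom{q-1}{i}\not\equiv 0$ really does force $\{u,v\}=\{t,\,s-t\}$ and $L=s-t$.

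The gap is exactly where you say it is. In the dihedral case you need $L(uv)^{(q-1)/2}\in W$ and $L(u^{q-1}-v^{q-1})\in W$ to force $\{u,v\}=\{s,t\}$, but unlike the cyclic case neither $Lu^{q-1}$ nor $Lv^{q-1}$ need lie in $W$ individually, so your earlier classification of forms $\ell^{q-1}m$ in $W$ does not apply directly; you have not indicated how the ``careful characteristic-$p$ analysis'' would actually go, and the separate exclusion of $A_4,S_4,A_5$ for $q-1\in\{12,24,60\}$ is also left open. As written, (c) is not proved.

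The paper sidesteps all of this with a short geometric argument you might prefer. It notes that $Q:=\varphi_1(1:0)=(0:0:1)$ is singular of multiplicity $(q-1)/2$ with $\varphi_1^{-1}(Q)=\{(1:0)\}$, so for any smooth $R$ the projection $\pi_R\circ\varphi_1$ ramifies at $(1:0)$ with index $\ge (q-1)/2$; if $R$ is inner Galois this index divides $q-1$, hence equals $(q-1)/2$ or $q-1$. Index $q-1$ gives $R=P_2$; index $(q-1)/2$ forces another point of the same tangency on $\overline{RQ}$, and a direct computation of the (three) ramification points of $\pi_Q\circ\varphi_1$ pins this down to $R=P_1$. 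No subgroup classification, no exceptional groups, no dihedral case analysis---the singular point $Q$ does all the work.
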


\begin{theorem} \label{main2}
Let $C_2$ be the plane curve of degree $q$ which is the image of the morphism 
$$ \varphi_2: \Bbb P^1 \rightarrow \Bbb P^2; \ (s:t) \mapsto (s^{\frac{q+1}{2}}t^{\frac{q-1}{2}}:(s-t)^{\frac{q+1}{2}}t^{\frac{q-1}{2}}:s^q-st^{q-1}). $$
Then: 
\begin{itemize}
\item[(a)] The point $P_1:=(0:1:0)=\varphi_2(0:1)$ is an inner Galois point with $G_{P_1} \cong D_{q-1}$.  
\item[(b)] The point $P_2:=(1:0:0)=\varphi_2(1:1)$ is an inner Galois point with $G_{P_2} \cong D_{q-1}$.  
\item[(c)] The number of inner Galois points on $C_2$ is exactly two. 
\end{itemize}
\end{theorem}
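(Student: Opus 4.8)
The plan is to work throughout in the affine parameter $x=s/t$, under which $\varphi_2$ is given by the three degree-$q$ forms $X=x^{(q+1)/2}$, $Y=(x-1)^{(q+1)/2}$, $Z=x^q-x$. Since $\deg C_2=q$ equals the degree of these forms, $\varphi_2$ is birational, so $k(C_2)=k(x)$ and $\operatorname{Aut}(C_2)=\operatorname{PGL}_2(k)$; throughout I abbreviate $m=(q+1)/2$. For (a) I would compute the projection from $P_1=(0:1:0)$, which forgets the $Y$-coordinate, so $\pi_{P_1}\circ\varphi_2=(X:Z)$ with base function $X/Z=x^{(q-1)/2}/(x^{q-1}-1)$. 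Writing $w=x^{(q-1)/2}$ gives $X/Z=w/(w^2-1)$. I then exhibit the automorphisms $\sigma:x\mapsto\zeta x$ with $\zeta^{(q-1)/2}=1$ (which fixes $w$) and $\tau:x\mapsto c/x$ with $c^{(q-1)/2}=-1$ (which sends $w\mapsto-1/w$, and one checks $(-1/w)/(w^{-2}-1)=w/(w^2-1)$). A direct calculation gives $\tau\sigma\tau^{-1}=\sigma^{-1}$, so $\langle\sigma,\tau\rangle\cong D_{q-1}$ has order $q-1=[k(x):k(X/Z)]$, and the extension is Galois with group $D_{q-1}$.

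For (b) the key is the substitution $y=x-1$: by the Frobenius identity $(y+1)^q=y^q+1$ one has $Z=x^q-x=y^q-y$, while $Y=y^{m}$, so the projection from $P_2=(1:0:0)$, namely $(Y:Z)=y^{m}/(y^q-y)$, is formally identical to the $P_1$-computation in the variable $y$; the same argument yields $G_{P_2}\cong D_{q-1}$. I would also record the linear involution $x\mapsto 1-x$, under which $X\leftrightarrow\pm Y$ and $Z\mapsto-Z$, so that $P_1\leftrightarrow P_2$ and the two Galois groups are conjugate in $\operatorname{Aut}(C_2)$.

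The heart of the proof is (c). Suppose $P=\varphi_2(\alpha)=(A:B:C)$ is an inner (hence smooth) Galois point, so $f_\alpha:=\pi_P\circ\varphi_2$ has degree $q-1$ with deck group $G_P\subset\operatorname{PGL}_2(k)$ of order $q-1$. First I locate a distinguished ramification point: the line through $P$ and $P_\infty=(0:0:1)$ is $\{BX-AY=0\}$, whose pull-back is $t^{(q-1)/2}\bigl(Bs^{m}-A(s-t)^{m}\bigr)$, so (for $A\neq B$) $f_\alpha$ has ramification index exactly $(q-1)/2$ at $x=\infty$. As $q-1$ is prime to $p$ the cover is tame, and an index $(q-1)/2$ point already excludes the exceptional groups $A_4,S_4,A_5$ (whose indices are at most $5$); thus $G_P$ is cyclic or dihedral and contains an element $\rho$ of order $(q-1)/2$, which stabilizes the index $(q-1)/2$ point $\infty$. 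Hence $\rho$ is affine, $\rho(x)=ex+f$ with $e^{(q-1)/2}=1$ (so $e^{m}=e$ and $e^q=e$). Writing the projection as $f_\alpha=(u:v)$ with $u=BX-AY$ and $v=CY-BZ$, the relations $X\circ\rho=eX+g_1$, $Y\circ\rho=eY+g_2$ ($\deg g_i\le m-1$) and $Z\circ\rho=eZ+(f^q-f)$ give $u\circ\rho=eu+h$ and $v\circ\rho=ev+\ell$; the deck identity $f_\alpha\circ\rho=f_\alpha$ then collapses to $hv=\ell u$. Since $\deg(\ell u)\le 2m-1=q$ while $\deg(hv)=\deg h+q$, this forces $h$ to be constant, which (on differentiating and using that the $\rho$-invariant polynomials are $k[(x-\mu)^{(q-1)/2}]$, where $\mu=A/(A-B)$ is the second fixed point of $\rho$) is exactly the statement that $u$ lies in the span of $1,\,x,\,(x-\mu)^{m}$.

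The final step is arithmetic. From $u=Bx^{m}-A(x-1)^{m}$ one has $u''=\tfrac{q^2-1}{4}\bigl(Bx^{(q-3)/2}-A(x-1)^{(q-3)/2}\bigr)$, and membership of $u$ in $\langle 1,x,(x-\mu)^{m}\rangle$ says precisely that $u''$ is a constant multiple of $(x-\mu)^{(q-3)/2}$; equivalently the binary form $Bs^{(q-3)/2}-A(s-t)^{(q-3)/2}$ is a perfect $\tfrac{q-3}{2}$-th power. But its roots satisfy $(s/(s-t))^{(q-3)/2}=A/B$, giving $(q-3)/2$ \emph{distinct} roots unless $A=0$ or $B=0$; hence for $q\ge 7$ one gets $AB=0$, i.e. $\alpha^{m}(\alpha-1)^{m}=0$, so $\alpha\in\{0,1\}$ and $P\in\{P_1,P_2\}$. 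I would then dispatch the finitely many loci set aside above: when $\alpha\in\mathbb{F}_q\setminus\{0,1\}$ one has $C=0$, the identity becomes $h(x^q-x)=(f^q-f)u$, a degree count forces $h=0$, and the same perfect-power obstruction applies; the case $A=B$ leads at once to a contradiction with $\deg u=m-1<(q-1)/2$; and $\alpha=\infty$ is the singular point $P_\infty$ of multiplicity $(q-1)/2$, hence not an inner point at all. The main obstacle is the base case $q=5$, where $(q-3)/2=1$ makes the perfect-power step vacuous; there I would instead impose the full identity $hv=\ell u$ coefficient-by-coefficient on the explicit quintic $C_2$, a short computation that again pins $\alpha$ down to $\{0,1\}$, completing the proof that the inner Galois points are exactly $P_1$ and $P_2$.
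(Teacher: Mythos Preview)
Your arguments for (a) and (b) are correct and are essentially the paper's: the projection from $P_1$ has invariant $w/(w^2-1)$ with $w=x^{(q-1)/2}$, which is the reciprocal of the paper's $f(s)=s^{(q-1)/2}-s^{-(q-1)/2}$, and for $P_2$ the substitution $y=x-1$ reduces to the same computation.

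For (c) your route is genuinely different from the paper's, and it has gaps. The most serious one is the ``distinct roots'' step. You claim that the equation $(s/(s-t))^{(q-3)/2}=A/B$ has $(q-3)/2$ distinct roots; this is false whenever $p=3$, since then $p\mid (q-3)/2$ and the polynomial $r^{(q-3)/2}-A/B$ is inseparable. For $q\ge 27$ the conclusion can still be rescued (after extracting a cube the form has $(q-3)/6\ge 4$ distinct roots), but for $q=9$ one has $(q-3)/2=3$ and in characteristic~$3$
\[
Bx^{3}-A(x-1)^{3}=(B-A)x^{3}+A=\bigl((B-A)^{1/3}x+A^{1/3}\bigr)^{3},
\]
so $u''$ is \emph{always} a perfect cube and your obstruction is vacuous. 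Thus $q=9$ needs a separate treatment just like $q=5$, and you have not provided one. A second slip is the $A=B$ case: you write ``$\deg u=m-1<(q-1)/2$'', but $m-1=(q-1)/2$, so the stated inequality is false. (The case is in fact excluded because the ramification index at $\infty$ then becomes $(q+1)/2$, which does not divide $q-1$.)

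The paper's argument for (c) is much shorter and avoids all of this arithmetic. The key observation you did not exploit is that the projection $\pi_Q$ from the singular point $Q=(0:0:1)=\varphi_2(1:0)$ is the simple cyclic cover $(s:t)\mapsto (s^{m}:(s-t)^{m})$, whose \emph{only} ramification points are $(0:1)$ and $(1:1)$. Now if $R$ is any inner Galois point, the fiber of $\pi_R$ through $Q$ contains $(1:0)$ with index $(q-1)/2$, hence (by Galois) exactly one further point $\hat S$ with the same index. Since the line $\overline{RQ}$ is then tangent to $C_2$ at $\varphi_2(\hat S)$ and passes through $Q$, the point $\hat S$ is a ramification point of $\pi_Q$ as well, forcing $\hat S\in\{(0:1),(1:1)\}$; and since $C_2\cap\overline{QP_i}=\{Q,P_i\}$, one gets $R\in\{P_1,P_2\}$. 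This works uniformly for all $q\ge 5$ with no case distinctions.
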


For the case where the characteristic is zero and $C$ is rational, Yoshihara \cite[Lemma 13]{yoshihara2} asserts that cyclic and dihedral groups do not appear as Galois groups of outer Galois points at the same time.   
To the contrary, in positive characteristic, we present the following example. 

\begin{theorem} \label{main3} 
Let $C_3$ be the plane curve of degree $q+1$ which is the image of the morphism 
$$ \varphi_3: \Bbb P^1 \rightarrow \Bbb P^2; \ (s:t) \mapsto (s^{\frac{q+1}{2}}t^{\frac{q+1}{2}}: (s+t)^{q+1}: s^{q+1}+\gamma t^{q+1}), $$
where $\gamma \in \Bbb F_{q} \setminus \{0, \pm 1\}$. 
Then: 
\begin{itemize}
\item[(a)] The point $P_1:=(0:1:0)$ is an outer Galois point with $G_{P_1} \cong D_{q+1}$.  
\item[(b)] The point $P_2:=(1:0:0)$ is an outer Galois point with $G_{P_2} \cong \Bbb Z/(q+1)\Bbb Z$.  
\item[(c)] The number of outer Galois points for $C_3$ is exactly two. 
\end{itemize}
\end{theorem}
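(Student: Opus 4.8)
The plan is to dehomogenize with $u=s/t$, so that after checking that $\varphi_3$ is birational (equivalently, that its image has degree $q+1$, which follows since the three defining forms are linearly independent with no common zero), we have $k(C_3)=k(u)$ and the two projections become $f(u)=u^{n}/(u^{2n}+\gamma)$ from $P_1$ and $g(u)=(u+1)^{2n}/(u^{2n}+\gamma)$ from $P_2$, where $n=(q+1)/2$. The engine of the whole argument is a normal form for the linear system $V:=\langle s^{n}t^{n},\,(s+t)^{2n},\,s^{2n}+\gamma t^{2n}\rangle$: since $(s+t)^{q}=s^{q}+t^{q}$, every element of $V$ is supported on the five monomials $s^{2n},s^{2n-1}t,s^{n}t^{n},st^{2n-1},t^{2n}$, and writing a coefficient vector as $(e_1,\dots,e_5)$ in this order, membership in $V$ is exactly $e_4=e_2$ and $e_5=\gamma e_1+(1-\gamma)e_2$. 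For parts (a) and (b) I would exhibit the groups: for $P_1$ the maps $u\mapsto\zeta u$ with $\zeta^{n}=1$ and $u\mapsto c/u$ with $c^{n}=\gamma$ all fix $f$, generate $D_{q+1}$, and since $\deg f=q+1$ equals the degree of projection from an outer point, the cover is Galois with $G_{P_1}\cong D_{q+1}$; for $P_2$, using $(u+1)^{q+1}=u^{q+1}+u^{q}+u+1$ one locates the totally ramified points $u=-1,\gamma$ of $g$, and the cyclic group of order $q+1$ fixing $\{-1,\gamma\}$ scales $(s+t)^{q+1}$ and $(s-\gamma t)^{q+1}$, both lying in $V$, by a common scalar, hence fixes $g$ and gives $G_{P_2}\cong\Bbb Z/(q+1)\Bbb Z$. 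Outerness is the elementary check that $C_3$ meets neither $\{x_0=x_2=0\}$ nor $\{x_1=x_2=0\}$.

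The substance is (c). For any outer Galois point $P$, the covering $\theta_P=\pi_P\circ\varphi_3$ has degree $q+1$ and Galois group $G_P\le\mathrm{PGL}_2(k)$ of order $q+1$, which is prime to $p$; hence $\theta_P$ is tamely ramified, and Riemann--Hurwitz gives $\sum_i(1-1/e_i)=2-2/(q+1)$. Solving this orbifold equation (equivalently, invoking the classification of tame subgroups of $\mathrm{PGL}_2$) shows $G_P$ is cyclic, dihedral of order $q+1$, or---only when $q+1\in\{12,24,60\}$---one of $A_4,S_4,A_5$. The key reduction is that the fixed points of the canonical normal cyclic subgroup produce perfect powers inside $V$: in the cyclic case the two totally ramified points give linear forms $\lambda$ with $\lambda^{q+1}\in V$, and in the dihedral case the two points fixed by the rotation subgroup give a binary quadratic $Q$ with $Q^{n}\in V$.

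I would then classify these perfect powers via the normal form. Imposing $e_4=e_2$ and $e_5=\gamma e_1+(1-\gamma)e_2$ on $(s-rt)^{q+1}=s^{q+1}-rs^{q}t-r^{q}st^{q}+r^{q+1}t^{q+1}$ forces $r\in\Bbb F_q$ and $r^{2}+(1-\gamma)r-\gamma=(r+1)(r-\gamma)=0$, so the only linear forms are $s+t$ and $s-\gamma t$, with roots $-1$ and $\gamma$. For $Q^{n}$ with $Q=s^{2}-\sigma_1 st+\sigma_2 t^{2}$, vanishing of the coefficient of $s^{2}t^{2n-2}$ (a legitimate constraint since $q\ge5$ forces $2\ne n$) yields $\sigma_2=\tfrac{1-n}{2}\sigma_1^{2}$, whence $\mathrm{disc}(Q)=\sigma_1^{2}-4\sigma_2=(2n-1)\sigma_1^{2}=q\,\sigma_1^{2}\equiv0\pmod p$; thus $Q$ has a double root unless a root lies at $\infty$, and a binomial-coefficient (Lucas) argument on the remaining case leaves only $Q=st$. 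Consequently a cyclic $G_P$ must fix $\{-1,\gamma\}$, hence equals the unique such cyclic group $G_{P_2}$ and $P=P_2$; a dihedral $G_P$ must have axis $\{0,\infty\}$, its Galois pencil being the rotation-invariant subspace $\langle s^{n}t^{n},\,s^{2n}+\gamma t^{2n}\rangle=W_{P_1}$, so $P=P_1$. With (a) and (b) this yields exactly two outer Galois points.

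I expect the main obstacle to be twofold. The heart is the perfect-power classification, whose slick but delicate point is the congruence $2n-1=q\equiv0\pmod p$ collapsing the discriminant and eliminating every genuine dihedral axis other than $\{0,\infty\}$; getting the degenerate and field-of-definition subcases right (a root at $\infty$, or coefficients $\sigma_i\notin\Bbb F_q$) is where care is needed. The secondary obstacle is the finite list $q\in\{11,23,59\}$, where $A_4,S_4,A_5$ are not excluded by the normal-cyclic-subgroup argument; there I would check directly, by the same monomial-support analysis applied to the relevant invariant forms, that no $A_4$-, $S_4$-, or $A_5$-invariant pencil of degree $q+1$ lies in $V$.
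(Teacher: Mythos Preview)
Your treatment of (a) and (b) matches the paper's, but your argument for (c) is genuinely different. The paper computes the Hessian of $\varphi_3$, finds that the only flexes of $C_3$ are the images of $(1:0),(0:1),(-1:1),(\gamma:1)$, and observes that any outer Galois point $R$ (having $|G_R|=q+1\ge 6$) must lie on the tangent line at some flex. The tangents at $(-1:1)$ and $(\gamma:1)$ meet $C_3$ with contact $q+1$ and pass through $P_2$; the tangents at $(0:1)$ and $(1:0)$ are both the line $X=0$ through $P_1$. If $R\neq P_2$, the paper lands $R$ on $X=0$ and then uses that the order-$\tfrac{q+1}{2}$ stabilizers of $(1:0)$ inside $G_{P_1}$ and $G_R$ must coincide in $\mathrm{PGL}(2,k)$, contradicting $G_{P_1}\cap G_R=\{1\}$. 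Your route instead classifies $G_P$ via Riemann--Hurwitz and then pins it down by classifying the $(q+1)$-th powers $\lambda^{q+1}$ and half-powers $Q^{(q+1)/2}$ lying in the five-monomial space $V$; the discriminant collapse $2n-1=q\equiv 0$ is a nice touch. The paper's approach is uniform in $q$ and avoids your residual case analysis for $q\in\{11,23,59\}$, while your approach is more algebraic and arguably more transparent about \emph{why} only $\{0,\infty\}$ and $\{-1,\gamma\}$ can serve as axes.

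Two points to tighten. First, your birationality claim is not justified: linear independence and absence of common zeros only ensure that $\varphi_3$ is a morphism to a non-degenerate curve, not that it has degree one onto its image (e.g.\ $(s:t)\mapsto(s^4:s^2t^2:t^4)$ is $2:1$ onto a conic). The paper fixes this by projecting from $Q=(0:1:1)\in C_3$ and checking that $\pi_Q\circ\varphi_3$ has degree $q$, which is coprime to $q+1$. Second, while you correctly show that $Q^{n}\in V$ forces the rotation axis to be $\{0,\infty\}$, your passage from ``axis $\{0,\infty\}$'' to ``$P=P_1$'' deserves one more line: there are many dihedral groups with that axis, but $W_P$ must lie in a single $C_n$-eigenspace of $V$, and the only $2$-dimensional one is the invariant subspace $\langle s^nt^n,\,s^{2n}+\gamma t^{2n}\rangle=W_{P_1}$, whence $P=P_1$. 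Finally, the exceptional cases $q\in\{11,23,59\}$ remain to be checked; the paper's flex-and-tangent argument handles them without extra work.
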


\begin{theorem} \label{main4} 
Let $C_4$ be the plane curve of degree $q+1 >6$ which is the image of the morphism 
$$ \varphi_4: \Bbb P^1 \rightarrow \Bbb P^2; \ (s:t) \mapsto (s^{\frac{q+1}{2}}t^{\frac{q+1}{2}}: (s+t)^{\frac{q+1}{2}}(s+\gamma t)^{\frac{q+1}{2}}: s^{q+1}-\gamma t^{q+1}), $$
where $\gamma \in \Bbb F_{q} \setminus \{\pm 1\}$ and $\gamma^{\frac{q-1}{2}}=1$. 
Then: 
\begin{itemize}
\item[(a)] The point $P_1:=(0:1:0)$ is an outer Galois point with $G_{P_1} \cong D_{q+1}$.  
\item[(b)] The point $P_2:=(1:0:0)$ is an outer Galois point with $G_{P_2} \cong D_{q+1}$.  
\end{itemize}
\end{theorem}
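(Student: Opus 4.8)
The plan is to use that $C_4$ is rational. Setting $u=s/t$, the field $k(C_4)$ sits inside $k(u)$, and for an outer point $P$ the projection $\pi_P$ is represented by a single rational function $f_P\in k(u)$ of degree $\deg C_4$; the associated Galois group $G_P$ is then the group of $\sigma\in\mathrm{PGL}_2(k)$ with $f_P\circ\sigma=f_P$. Accordingly I would (i) write the two projections explicitly, (ii) prove $\varphi_4$ is birational, so that $\deg C_4=q+1$, the points $P_1,P_2$ lie off $C_4$, and $G_{P_i}$ is \emph{exactly} the stabilizer of $f_{P_i}$, and (iii) for each $i$ exhibit a subgroup isomorphic to $D_{q+1}$ in this stabilizer. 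Since a stabilizer of a degree-$(q+1)$ function has order at most $q+1$, producing $q+1$ elements forces the extension to be Galois with group precisely $D_{q+1}$; separability never has to be checked separately.

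Dropping the coordinate at $P_1=(0:1:0)$ and at $P_2=(1:0:0)$ gives
$$ f_1(u)=\frac{u^{\frac{q+1}{2}}}{u^{q+1}-\gamma}, \qquad f_2(u)=\frac{(u+1)^{\frac{q+1}{2}}(u+\gamma)^{\frac{q+1}{2}}}{u^{q+1}-\gamma}, $$
and since $\gamma\neq 0,\pm 1$ neither numerator shares a root with its denominator, so both have degree $q+1$. For birationality note that $\varphi_4\circ\tau=\varphi_4$ forces $\tau$ to fix both ratios, i.e. $\tau\in D_1\cap D_2$ where $D_i=\{\sigma:f_i\circ\sigma=f_i\}$. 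Such a $\tau$ preserves $f_1^{-1}(0)=\{0,\infty\}$ and $f_2^{-1}(0)=\{-1,-\gamma\}$, and a short enumeration of Möbius maps shows the only nontrivial candidate is $u\mapsto\gamma/u$. A one-line computation gives $f_1(\gamma/u)=-\gamma^{\frac{q-1}{2}}f_1(u)$, so $\gamma/u\in D_1$ exactly when $\gamma^{\frac{q-1}{2}}=-1$; the hypothesis $\gamma^{\frac{q-1}{2}}=1$ rules this out, whence $D_1\cap D_2=\{\mathrm{id}\}$ and $\varphi_4$ is birational. This is the one place the hypothesis is essential (for $\gamma^{\frac{q-1}{2}}=-1$ the map would be two-to-one and $\deg C_4=\frac{q+1}{2}$), and the restriction $q+1>6$ is what guarantees that a square $\gamma\neq\pm1$ exists in $\Bbb F_q$ at all. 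That $P_1,P_2\notin C_4$ is immediate from the coordinates.

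For (a) I would treat $f_1$ directly: the rotations $u\mapsto\zeta u$ with $\zeta^{\frac{q+1}{2}}=1$ fix both numerator and denominator, giving a cyclic group of order $\frac{q+1}{2}$, and the involution $u\mapsto c/u$ with $c^{\frac{q+1}{2}}=-\gamma$ (solvable in $k$) also fixes $f_1$ and conjugates each rotation to its inverse. Together they generate $D_{q+1}$; because $p\nmid\frac{q+1}{2}$ these $q+1$ maps are distinct, so $|G_{P_1}|=q+1=\deg f_1$ and $G_{P_1}\cong D_{q+1}$.

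For (b) the key step — and the main obstacle — is to bring $f_2$ into the shape of $f_1$. Changing the source coordinate by $w=\frac{u+1}{u+\gamma}$ (which sends $-1,-\gamma$ to $0,\infty$), the numerator becomes $(1-\gamma)^{q+1}w^{\frac{q+1}{2}}/(w-1)^{q+1}$, while the denominator turns into $\big[(1-\gamma w)^{q+1}-\gamma(w-1)^{q+1}\big]/(w-1)^{q+1}$. The crux is the Frobenius identity $(a+b)^q=a^q+b^q$ together with $\gamma^q=\gamma$, which collapses the bracket to $(1-\gamma)(1-\gamma w^{q+1})$, yielding
$$ f_2=(1-\gamma)^{q}\,\frac{w^{\frac{q+1}{2}}}{1-\gamma w^{q+1}}. $$
This is of exactly the form handled in (a), so the same rotations $w\mapsto\zeta w$ ($\zeta^{\frac{q+1}{2}}=1$) and an involution $w\mapsto d/w$ ($d^{\frac{q+1}{2}}=-1/\gamma$) generate $D_{q+1}$ and give $G_{P_2}\cong D_{q+1}$. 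I expect this denominator collapse to be the only non-routine computation; the remaining work — verifying the exhibited maps fix $f_i$, that the $q+1$ of them are distinct, and that orders meet degrees — is bookkeeping.
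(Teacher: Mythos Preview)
Your argument for parts (a) and (b) is correct and matches the paper's almost exactly: the same dihedral invariance for $f_1$, and for $f_2$ the same M\"obius change of variable (your $w=\frac{u+1}{u+\gamma}$ is the reciprocal of the paper's $u=\frac{s+\gamma}{s+1}$) together with the same Frobenius collapse
\[
(1-\gamma w)^{q+1}-\gamma(1-w)^{q+1}=(1-\gamma)(1-\gamma w^{q+1}),
\]
which in the paper's coordinates reads $\frac{1}{1-\gamma}\big[(s+\gamma)^{q+1}-\gamma(s+1)^{q+1}\big]=s^{q+1}-\gamma$.

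The one genuine difference is birationality. The paper projects from $Q=\varphi_4(1:0)=(0:1:1)$, checks that $\pi_Q\circ\varphi_4$ has degree $q$ (here $\gamma^{(q-1)/2}=1$ enters, making the $t^{q+1}$-coefficient $2\gamma\neq0$), and concludes $\deg\varphi_4=1$. You instead argue via deck transformations, showing $D_1\cap D_2=\{1\}$. Be careful: the implication ``no nontrivial $\tau$ with $\varphi_4\circ\tau=\varphi_4$ $\Rightarrow$ $\varphi_4$ birational'' is false for general maps out of $\Bbb P^1$. What makes it work here is that $k(u)/k(f_1)$ is \emph{Galois} (because you exhibit $q+1=\deg f_1$ automorphisms fixing $f_1$), so the intermediate field $k(C_4)=k(f_1,f_2)$ corresponds under the Galois correspondence to the subgroup $D_1\cap D_2$, and then $|D_1\cap D_2|=1$ gives $[k(u):k(C_4)]=1$. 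Since you only produce those $q+1$ automorphisms in step (iii), the argument as written is out of order; swap the order (exhibit the $D_{q+1}$ inside $D_1$ first, then run the deck-transformation computation) and the proof is complete.
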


Our resluts are closely related to the following problem on projective linear groups (cf. \cite[Theorem 1]{fukasawa2}). 

\begin{problem}
Let $k$ be a field and $X=\Bbb P^1(k)$ the projective line over $k$. 
We consider the following two conditions for a pair $(H_1, H_2)$ of different finite subgroups $H_1$ and $H_2 \subset {\rm PGL}(2, k)$: 
\begin{itemize}
\item[(a)] $H_1 \cap H_2=\{1\}$. 
\item[(b)] There exist different points $P_1$ and $P_2 \in X$ such that 
$$\{\sigma(P_2) \ | \ \sigma \in H_1\setminus \{1\}\}=\{\tau(P_1) \ | \ \tau \in H_2 \setminus \{1\}\}$$  
(with multiplicities).  
\end{itemize}
When does a pair $(H_1, H_2)$ with (a) and (b) exist?  
\end{problem}

\section{Proof of Theorems \ref{main1} and \ref{main2}} 

We assume that $\alpha \in \Bbb F_q$ is a primitive element. 
The following two lemmas are easily proved. 

\begin{lemma} \label{dihedral} 
Let $\sigma$ and $\tau \in {\rm Aut}(\Bbb P^1) \cong {\rm PGL}(2, k)$ be represented by the matrices 
$$ A_{\sigma}=\left(\begin{array}{cc}
1 & 0 \\
0 & \alpha^2 
\end{array}\right) \ \mbox{ and } \
A_{\tau}=\left(\begin{array}{cc}
0 & 1 \\
\alpha & 0 
\end{array}\right)$$
respectively, that is, $\sigma(s, t)=(s, t)A_{\sigma}$ and $\tau(s,t)=(s,t)A_{\tau}$.  
Then: 
\begin{itemize} \label{cyclic} 
\item[(a)] The group $H$ generated by $\sigma$ and $\tau$ is isomorphic to $D_{q-1}$.  
\item[(b)] The rational function $f(s)=s^{\frac{q-1}{2}}-s^{-\frac{q-1}{2}} \in k(\Bbb P^1)=k(s)$ is invariant under the action by $H$. 
\end{itemize}
\end{lemma}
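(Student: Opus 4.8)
The plan is to prove the two assertions separately, both by direct computation: part (a) inside $\mathrm{PGL}(2,k)$ and part (b) inside the function field $k(s)=k(\Bbb P^1)$, where I regard $s$ as the affine coordinate $s/t$.

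For (a), I would first record the orders of the two generators. Since $\alpha$ is a primitive element, $\alpha^2$ has multiplicative order $(q-1)/2$ (as $q$ is odd), so the diagonal class $\sigma$ represented by $A_\sigma=\mathrm{diag}(1,\alpha^2)$ has order $(q-1)/2$ in $\mathrm{PGL}(2,k)$; and because $A_\tau^2=\alpha I$ is scalar, $\tau$ is an involution. The defining dihedral relation $\tau\sigma\tau^{-1}=\sigma^{-1}$ would then follow from the matrix identity $A_\tau A_\sigma A_\tau=\alpha\,\mathrm{diag}(\alpha^2,1)$, which is a nonzero scalar multiple of $A_\sigma^{-1}=\mathrm{diag}(1,\alpha^{-2})$ and hence represents $\sigma^{-1}$ projectively. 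Since $\sigma$ is diagonal while $\tau$ is anti-diagonal, we have $\tau\notin\langle\sigma\rangle$, so $H=\langle\sigma,\tau\rangle$ has order $2\cdot(q-1)/2=q-1$ and is isomorphic to $D_{q-1}$.

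For (b), I would compute the induced action on the coordinate $s$. From the right action $\sigma(s,t)=(s,t)A_\sigma$ and $\tau(s,t)=(s,t)A_\tau$ one reads off $\sigma\colon s\mapsto \alpha^{-2}s$ and $\tau\colon s\mapsto \alpha/s$. Substituting into $f$ and using the two identities $\alpha^{q-1}=1$ and $\alpha^{(q-1)/2}=-1$ (the latter because $\alpha^{(q-1)/2}$ is the unique element of order two in $\Bbb F_q\setminus\{0\}$) gives $f(\alpha^{-2}s)=f(s)$ and $f(\alpha/s)=f(s)$ after a one-line computation, the exponents in the second case contributing the factor $(-1)$ that cancels the sign. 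Since $\sigma$ and $\tau$ generate $H$, invariance of $f$ under $H$ follows.

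The only point requiring genuine care — and the reason the statement is merely \emph{easily proved} rather than automatic — is the bookkeeping of projective (scalar) equivalence: one must consistently reduce $2\times2$ matrices modulo scalars when checking both the order of $\tau$ and the conjugation relation, and one must keep track of the direction of the right action when passing from points to the coordinate $s$. I expect no deeper obstacle: the exponents $(q-1)/2$ appearing in $f$ are chosen precisely so that the primitive-element identities $\alpha^{q-1}=1$ and $\alpha^{(q-1)/2}=-1$ collapse the verification.
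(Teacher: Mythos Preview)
Your proposal is correct and is precisely the routine verification the paper has in mind: the paper gives no proof of this lemma beyond the remark that it is ``easily proved,'' and your direct computations in $\mathrm{PGL}(2,k)$ and in $k(s)$ are the expected way to fill that in. The orders, the conjugation identity $A_\tau A_\sigma A_\tau=\alpha\,\mathrm{diag}(\alpha^2,1)\sim A_\sigma^{-1}$, and the two invariance checks using $\alpha^{q-1}=1$ and $\alpha^{(q-1)/2}=-1$ are all accurate as you wrote them.
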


\begin{lemma} \label{cyclic}
Let $\eta \in {\rm Aut}(\Bbb P^1)$ be represented by the matrix 
$$ A_{\eta}=\left(\begin{array}{cc}
1 & 0 \\
\alpha-1 & \alpha 
\end{array}\right). $$  
Then: 
\begin{itemize}
\item[(a)] The order of $\eta$ is $q-1$.  
\item[(b)] The rational function $g(s)=\frac{s-1}{s^q-s} \in k(\Bbb P^1)=k(s)$ is invariant under the action by $\eta^*$. 
\end{itemize}
\end{lemma}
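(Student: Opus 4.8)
The plan is to prove both parts by direct computation with the matrix $A_\eta$, exploiting that $\alpha$ is a primitive element of $\Bbb F_q$ and that the $q$-th power map is the Frobenius.

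For part (a), I would compute the powers of $A_\eta$ explicitly. Since $A_\eta$ is lower triangular with diagonal entries $1$ and $\alpha$, a straightforward induction yields
$$ A_\eta^n=\left(\begin{array}{cc} 1 & 0 \\ \alpha^n-1 & \alpha^n \end{array}\right), $$
where the lower-left entry is obtained from the telescoping identity $(\alpha-1)\sum_{j=0}^{n-1}\alpha^j=\alpha^n-1$. The transformation $\eta^n$ is trivial in ${\rm PGL}(2,k)$ precisely when $A_\eta^n$ is a scalar matrix, and by the displayed formula this holds if and only if $\alpha^n=1$. As $\alpha$ is primitive, its order in $\Bbb F_q^\times$ is $q-1$, so the order of $\eta$ is $q-1$.

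For part (b), I would first rewrite the action of $A_\eta$ as a M\"obius transformation on the affine coordinate $s$, giving $\eta^*(s)=(s+\alpha-1)/\alpha$; the precise normalization is immaterial here, since a function is fixed by $\eta^*$ if and only if it is fixed by $(\eta^*)^{-1}$. Setting $w=\eta^*(s)$, I would simplify the numerator and denominator of $g(w)$ separately. The numerator gives $w-1=(s-1)/\alpha$ at once. For the denominator, the key step is the Frobenius identity in characteristic $p$: since $\alpha,1\in\Bbb F_q$ we have $\alpha^q=\alpha$ and $(s+\alpha-1)^q=s^q+\alpha-1$, so that $w^q-w=(s^q-s)/\alpha$. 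The common factor $1/\alpha$ cancels, whence $\eta^*(g)=(s-1)/(s^q-s)=g$, as required.

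Both computations are entirely routine; the only point needing care is the correct use of the Frobenius (additivity of the $q$-th power together with the $\Bbb F_q$-invariance of $\alpha$), which is exactly what forces the denominator $s^q-s$ to scale by the same factor $1/\alpha$ as the numerator $s-1$, so that their quotient is preserved. Conceptually, the denominator $s^q-s=\prod_{a\in\Bbb F_q}(s-a)$ reflects that $\eta$ stabilizes $\Bbb F_q\cup\{\infty\}$ and fixes the point $s=1$, which is why $g$ descends to an invariant.
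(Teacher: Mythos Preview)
Your argument is correct. The paper itself does not give a proof of this lemma, merely remarking that it is ``easily proved,'' and your direct computation with $A_\eta^n$ and with the Frobenius identity $(s+\alpha-1)^q=s^q+\alpha-1$ is exactly the kind of routine verification the authors have in mind; there is nothing to compare.
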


When $R_1$ and $R_2$ are different points in $\Bbb P^2$, the line passing through $R_1$ and $R_2$ is denoted by $\overline{R_1R_2}$. 

\begin{proof}[Proof of Theorem \ref{main1}]
The composite (rational) map $\pi_{P_1} \circ \varphi_1$ is given by 
$$ (s:t) \mapsto (s^{\frac{q+1}{2}}t^{\frac{q-1}{2}}:s^q-st^{q-1})=(s^{\frac{q-1}{2}}t^{\frac{q-1}{2}}:s^{q-1}-t^{q-1}), $$
since $\pi_{P_1}$ is represented by $(X:Y:Z) \mapsto (X:Z)$. 
By this expression, the degree of $\pi_{P_1}\circ \varphi_1$ is $q-1$ and hence, $\varphi_1$ is birational onto the image $C_1$. 
Note that 
$$ \pi_{P_1}\circ\varphi_1(s:1)=\left(s^{\frac{q-1}{2}}:s^{q-1}-1\right)=\left(1:f(s)\right), $$
where $f(s)$ is the rational function as in Lemma \ref{dihedral}. 
By Lemma \ref{dihedral},  the rational function $f(s)$ is invariant under the action by $H \cong D_{q-1}$. 
Therefore, $k(s)/k(f(s))$ is a Galois extension and hence, $P_1$ is a Galois point. 
In this case, $G_{P_1} \cong D_{q-1}$. 
Assertion (a) in Theorem \ref{main1} follows. 
Further, 
$$ \pi_{P_2}\circ\varphi_1(s:1)=\left(s-1: s^q-s\right)=(g(s):1), $$
where $g(s)$ is the rational function as in Lemma \ref{cyclic}. 
By Lemma \ref{cyclic},  the rational function $g(s)$ is invariant under the action by $ \eta^*$. 
Therefore, $k(s)/k(g(s))$ is a Galois extension and hence, $P_2$ is a Galois point. 
In this case, $G_{P_2} \cong \Bbb Z/(q-1)\Bbb Z$. 
Assertion (b) in Theorem \ref{main1} follows. 

We prove that the set of all inner Galois points for $C_1$ is equal to $\{P_1, P_2\}$. 
Let $Q:=\varphi_1(1:0)=(0:0:1)$. 
Then, the composite map $\pi_Q\circ\varphi_1$ is given by 
$$ (s:t) \mapsto (s^{\frac{q+1}{2}}t^{\frac{q-1}{2}}:(s-t)t^{q-1})=(s^{\frac{q+1}{2}}:(s-t)t^{\frac{q-1}{2}}).  $$ 
Since the degree of $\pi_Q \circ \varphi_1$ is $(q+1)/2$, the point $Q$ is a singular point of $C_1$ with multiplicity $(q-1)/2$. 
The ramification indices of $\pi_Q \circ \varphi_1$ at $(0:1)$ and at $(1:0)$ are equal to $(q+1)/2$ and $(q-1)/2$ respectively. 
For the function $h(t)=(1-t)t^{\frac{q-1}{2}}$, 
$$h'(t)=-\frac{1}{2}t^{\frac{q-3}{2}}(1+t).  $$ 
Therefore, three points $(0:1)$, $(1:0)$ and $(1:-1)$ are all ramification points for $\pi_Q \circ \varphi_1$. 
Furthermore, the ramification index at $(1:-1)$ is $2$.
Note that $\varphi_1^{-1}(Q)$ consists of a unique point $(1:0)$. 
Therefore, for each point $R \in C_1 \setminus \{Q\}$, the map $\pi_R \circ \varphi_1$ is ramified at $(1:0)$ with index $\ge (q-1)/2$. 
Assume that $R$ is an inner Galois point. 
It follows from \cite[III. 7.2]{stichtenoth} that $\pi_R \circ \varphi_1$ is ramified at $(1:0)$ with index $(q-1)/2$ or $q-1$. 
If the index at $(1:0)$ is $q-1$, then $R=P_2$. 
Assume that the index at $(1:0)$ is $(q-1)/2$. 
Then, there exists a ramification point $\hat{S} \in \Bbb P^1$ with index $(q-1)/2$ such that $\varphi_1(\hat{S})\ne Q$ and $\varphi_1(\hat{S}) \in \overline{RQ}$. 
Considering $\pi_Q \circ \varphi_1$, $\hat{S}=(0:1)$ or $(1:-1)$. 
If $\hat{S}=(0:1)$, then $R=\varphi_1(0:1)=P_1$, since $C_1 \cap \overline{P_1Q}=\{P_1, Q\}$. 
Assume that $\hat{S}=(1:-1)$. 
Then, $(q-1)/2=2$, and hence, $q=5$ and $R \in \overline{Q\varphi_1(1:-1)}$. 
However, this is a contradiction, because the point $\varphi_1(1:-1)=(1:2:0)=\varphi_1(2:1)$ is a singular point and $C \cap \overline{Q \varphi_1(1:-1)}=\{Q, \varphi_1(1:-1)\}$. 
We complete the proof of Theorem \ref{main1}(c). 
\end{proof} 

\begin{proof}[Proof of Theorem \ref{main2}]
Assertion (a) in Theorem \ref{main2} is similar to assertion (a) in Theorem \ref{main1}. 
The composite map $\pi_{P_2} \circ \varphi_2$ is given by  
$$ (s:1) \mapsto ((s-1)^{\frac{q+1}{2}}:s^q-s)=\left(1:f(s-1)\right). $$
Then, the induced field extension is $k(u)/k(f(u))$, where $u=s-1$. 
This is a Galois extension, similar to assertion (a) in Theorem \ref{main1}. 
Assertion (b) in Theorem \ref{main2} follows. 

We prove that the set of all inner Galois points for $C_2$ is equal to $\{P_1, P_2\}$. 
Let $Q:=\varphi_2(1:0)=(0:0:1)$. 
Then, the composite map $\pi_Q\circ\varphi_2$ is given by 
$$ (s:t) \mapsto (s^{\frac{q+1}{2}}t^{\frac{q-1}{2}}:(s-t)^{\frac{q+1}{2}}t^{\frac{q-1}{2}})=(s^{\frac{q+1}{2}}:(s-t)^{\frac{q+1}{2}}).  $$ 
Since the degree of $\pi_Q \circ \varphi_2$ is $(q+1)/2$, the point $Q$ is a singular point of $C_2$ with multiplicity $(q-1)/2$.
Further, by this expression, $\pi_Q \circ \varphi_2$ is a cyclic covering and all ramification points are $(0:1)$ and $(1:1)$ with index $(q+1)/2$.  
Note that $\varphi_2^{-1}(Q)$ consists of a unique point $(1:0)$, and the intersection multiplicity of $C_2 \cap L$ at $Q$ is at most $\frac{q-1}{2}+1$ for any line $L$ passing through $Q$. 
Therefore, for each point $R \in C_2 \setminus \{Q\}$, the map $\pi_R \circ \varphi_2$ is ramified at $(1:0)$ with index $(q-1)/2$ or $(q+1)/2$. 
Assume that $R$ is an inner Galois point. 
It follows from \cite[III. 7.2]{stichtenoth} that $\pi_R \circ \varphi_2$ is ramified at $(1:0)$ with index $(q-1)/2$. 
Then, there exists a ramification point $\hat{S} \in \Bbb P^1$ with index $(q-1)/2$ such that $\varphi_2(\hat{S})\ne Q$ and $\varphi_2(\hat{S}) \in \overline{RQ}$. 
Considering $\pi_Q\circ\varphi_2$, $\hat{S}=(0:1)$ or $(1:1)$. 
Then, $R=P_1$ or $P_2$, since $C_2 \cap \overline{QP_i}=\{Q, P_i\}$ for $i=1, 2$. 
We complete the proof of Theorem \ref{main2}(c). 
\end{proof} 

\section{Proof of Theorems \ref{main3} and \ref{main4}}
\begin{proof}[Proof of Theorem \ref{main3}]
Let $Q:=\varphi_3(1:0)=(0:1:1)$. 
Then, the composite map $\pi_{Q} \circ \varphi_3$ is given by 
$$ (s:t) \mapsto (s^{\frac{q+1}{2}}t^{\frac{q+1}{2}}:(s+t)^{q+1}-(s^{q+1}+\gamma t^{q+1}))=(s^{\frac{q+1}{2}}t^{\frac{q-1}{2}}:s^{q}+st^{q-1}+(1-\gamma)t^q). $$
Since $\gamma \ne 1$, the degree of $\pi_Q \circ \varphi_3$ is $q$ and hence, $\varphi_3$ is birational onto the image $C_3$. 

We consider the point $P_1$. 
Since $\gamma \ne 0$, $P_1 \in \Bbb P^2 \setminus C_3$. 
The composite map $\pi_{P_1} \circ \varphi_3$ is given by 
$$ (s:1) \mapsto (s^{\frac{q+1}{2}}:s^{q+1}+\gamma)=(1:s^{\frac{q+1}{2}}+\gamma s^{-\frac{q+1}{2}}).  $$ 
The rational function $s^{\frac{q+1}{2}}+\gamma s^{-\frac{q+1}{2}}$ is invariant under the actions $s \mapsto \delta s^{-1}$ and $s \mapsto \zeta s$, where $\delta$ is a $(q+1)/2$-th root of $\gamma$ and $\zeta$ is a $(q+1)/2$-th root of unity. 
Therefore, the extension $k(\Bbb P^1)/\pi_{P_1}^*k(\Bbb P^1)$ is a Galois extension and hence, $P_1$ is a Galois point. 
In this case, $G_{P_1} \cong D_{q+1}$. 
Assertion (a) in Theorem \ref{main3} follows. 
We consider the point $P_2$. 
Since $\gamma \ne -1$, $P_2 \in \Bbb P^2 \setminus C_3$.
The composite map $\pi_{P_2}\circ\varphi_3$ is given by
$$ (s:1) \mapsto \left((s+1)^{q+1}:s^{q+1}+\gamma \right).  $$ 
Note that 
$$ (1+\gamma)(s^{q+1}+\gamma)-\gamma(s+1)^{q+1}=(s-\gamma)^{q+1}. $$
Therefore, the extension $k(\Bbb P^1)/\pi_{P_2}^*k(\Bbb P^1)$ is a Galois extension and hence, $P_2$ is a Galois point. 
In this case, $G_{P_2} \cong \Bbb Z/(q+1)\Bbb Z$. 
Assertion (b) in Theorem \ref{main3} follows. 

We prove that the set of all outer Galois points for $C_3$ is equal to $\{P_1, P_2\}$. 
Note that the rank of the matrix
$$ \left(\begin{array}{c} 
\varphi_3 \\
\frac{d\varphi_3}{ds} \\
\end{array} \right)=
\left(\begin{array}{ccc}
s^{\frac{q+1}{2}} & (s+1)^{q+1} & s^{q+1}+\gamma \\
\frac{q+1}{2}s^{\frac{q-1}{2}} & (s+1)^q & s^q 
\end{array} \right)
$$
is two for each $s$, that is, the differential map of $\varphi_3$ is injective at each point $(s:1) \in \Bbb P^1$. 
We consider the Hessian matrix $H$ of $\varphi_3$: 
$$ \left(\begin{array}{c} 
\varphi_3 \\
\frac{d\varphi_3}{ds} \\
\frac{d^2\varphi_3}{ds^2}
\end{array} \right)=
\left(\begin{array}{ccc}
s^{\frac{q+1}{2}} & (s+1)^{q+1} & s^{q+1}+\gamma \\
\frac{q+1}{2}s^{\frac{q-1}{2}} & (s+1)^q & s^q \\
\frac{q^2-1}{4}s^{\frac{q-3}{2}} & 0 & 0 
\end{array} \right). 
$$
Note that $\det H=0$ if and only if $s=0, -1, \gamma$. 
It follows that all flexes of $C_3$ are $(1:0), (0:1), (-1:1)$ and $(\gamma:1)$ (see \cite[Section 7.6]{hkt}). 
If $R$ is an outer Galois point such that the order of $G_R$ is at least five, then there exists a ramification point with index at least three (see, for example, \cite[Theorem 11.91]{hkt}). 
Then, $R$ is contained in the tangent line at a flex. 
If $R \ne P_2$, then $R$ is contained in the line defined by $X=0$, which passes through points $\varphi_3(1:0)$, $\varphi_3(0:1)$ and $P_1$. 
It follows from \cite[III. 7.2, 8.2]{stichtenoth} that there exist subgroups $H_1 \subset G_{P_1}$ and $H_2 \subset G_R$ of order $\frac{q+1}{2}$ fixing the point $(1:0)$. 
Then, $H_1$ and $H_2$ are normal subgroups of $G_{P_1}$ and $G_R$ respectively, and hence, they fix points $(1:0)$ and $(0:1)$. 
By a property of ${\rm PGL}(2, k)$, it follows that $H_1=H_2$, that is, $G_{P_1}\cap G_{R} \ne \{1\}$.
This is a contradiction (see, for example, \cite[Lemma 7]{fukasawa1}).   
We complete the proof of Theorem \ref{main3}.  
\end{proof}

\begin{proof}[Proof of Theorem \ref{main4}]
Let $Q:=\varphi_4(1:0)=(0:1:1)$. 
Then, the composite map $\pi_{Q} \circ \varphi_4$ is given by 
$$ (s:t) \mapsto (s^{\frac{q+1}{2}}t^{\frac{q+1}{2}}:(s+t)^{\frac{q+1}{2}}(s+\gamma t)^{\frac{q+1}{2}}-(s^{q+1}-\gamma t^{q+1})). $$ 
Note that the coefficient of $t^{q+1}$ and $s^qt$ for the function $(s+t)^{\frac{q+1}{2}}(s+\gamma t)^{\frac{q+1}{2}}-(s^{q+1}-\gamma t^{q+1})$ is $\gamma^{\frac{q+1}{2}}+\gamma=2\gamma \ne 0$ and $\frac{q+1}{2}(\gamma+1) \ne 0$ respectively. 
The degree of $\pi_Q \circ \varphi_3$ is $q$ and hence, $\varphi_4$ is birational onto the image $C_4$. 

We consider the point $P_1$. 
Since $\gamma \ne 0$, $P_1 \in \Bbb P^2 \setminus C_4$. 
The composite map $\pi_{P_1} \circ \varphi_4$ is given by 
$$ (s:1) \mapsto (s^{\frac{q+1}{2}}:s^{q+1}-\gamma)=(1:s^{\frac{q+1}{2}}-\gamma s^{-\frac{q+1}{2}}).  $$ 
The rational function $s^{\frac{q+1}{2}}-\gamma s^{-\frac{q+1}{2}}$ is invariant under the actions $s \mapsto \delta s^{-1}$ and $s \mapsto \zeta s$, where $\delta$ is a $(q+1)/2$-th root of $-\gamma$ and $\zeta$ is a $(q+1)/2$-th root of unity. 
Therefore, the extension $k(\Bbb P^1)/\pi_{P_1}^*k(\Bbb P^1)$ is a Galois extension and hence, $P_1$ is a Galois point. 
In this case, $G_{P_1} \cong D_{q+1}$. 
Assertion (a) in Theorem \ref{main4} follows. 
We consider the point $P_2$. 
Since $(-1)^{q+1}-\gamma \ne 0$ and $(-\gamma)^{q+1}-\gamma=\gamma(\gamma-1)^q\ne 0$, it follows that $P_2 \in \Bbb P^2 \setminus C_4$. 
The composite map $\pi_{P_2} \circ \varphi_4$ is given by 
$$ (s:1) \mapsto ((s+1)^{\frac{q+1}{2}}(s+\gamma)^{\frac{q+1}{2}}:s^{q+1}-\gamma). $$
Note that 
$$ \frac{1}{1-\gamma}\{-\gamma (s+1)^{q+1}+(s+\gamma)^{q+1}\}=s^{q+1}-\gamma. $$
Let 
$$ u:=\frac{s+\gamma}{s+1} \ \mbox{ and } \ h(u):=-\gamma u^{-\frac{q+1}{2}}+u^{\frac{q+1}{2}}. $$
Then, $k(\Bbb P^1)/\pi_{P_2}^*k(\Bbb P^1)=k(u)/k(h(u))$. 
This is a Galois extension, similar to assertion (a) in Theorem \ref{main4}. 
In this case, $G_{P_2} \cong D_{q+1}$. 
Assertion (b) in Theorem \ref{main4} follows. 
\end{proof}


\begin{thebibliography}{20} 
\bibitem{fukasawa1} S. Fukasawa, Classification of plane curves with infinitely many Galois points, J. Math. Soc. Japan {\bf 63} (2011), 195--209. 
\bibitem{fukasawa2} S. Fukasawa, A birational embedding of an algebraic curve into a projective plane with two Galois points, preprint, arXiv:1611.03953. 
\bibitem{hkt} J. W. P. Hirschfeld, G. Korchm\'{a}ros and F. Torres, {\it Algebraic curves over a finite field}, Princeton Univ. Press, Princeton, 2008. 
\bibitem{miura-yoshihara} K. Miura and H. Yoshihara, Field theory for function fields of plane quartic curves, J. Algebra {\bf 226} (2000), 283--294. 
\bibitem{stichtenoth} H. Stichtenoth, {\it Algebraic function fields and codes}, Universitext, Springer-Verlag, Berlin, 1993.
\bibitem{yoshihara1} H. Yoshihara, Function field theory of plane curves by dual curves, J. Algebra {\bf 239} (2001), 340--355. 
\bibitem{yoshihara2} H. Yoshihara, Galois points for plane rational curves, Far East J. Math. {\bf 25} (2007), 273--284;  
Errata, ibid. {\bf 29} (2008), 209--212.
\bibitem{yoshihara-fukasawa} H. Yoshihara and S. Fukasawa, List of problems, available at: \\ http://hyoshihara.web.fc2.com/openquestion.html
\end{thebibliography}
\end{document}